\documentclass[12pt]{amsart}
\usepackage{amsmath,amssymb,amsthm,mathtools}
\usepackage[margin=1.15in]{geometry}
\usepackage{microtype}
\usepackage{thmtools}
\usepackage[hypertexnames=false]{hyperref}
\hypersetup{
  colorlinks=true,
  linkcolor=blue,
  citecolor=blue,
  urlcolor=blue
}
\usepackage[nameinlink,capitalise]{cleveref}

\newtheorem{theorem}{Theorem}[section]
\newtheorem{lemma}[theorem]{Lemma}
\newtheorem{proposition}[theorem]{Proposition}
\newtheorem{corollary}[theorem]{Corollary}
\theoremstyle{definition}
\newtheorem*{definition}{Definition}
\newtheorem*{remark}{Remark}
\newtheorem*{example}{Example}
\newtheorem{case}{Case}[theorem]
\Crefname{case}{Case}{Cases}

\newcommand{\ZZ}{\mathbb Z}
\newcommand{\FF}{\mathbb F}
\newcommand{\PB}{P_B}
\newcommand{\TB}{\mathcal{T}_B}

\title[Four-digit Kaprekar dynamics in odd bases]{Four-digit Kaprekar dynamics in odd bases}
\author{Evan Chen}
\author{Ken Ono}
\author{Richard E. Schwartz}
\author{Dinesh S. Thakur}

\address{Axiom Math, 124 University Avenue, Palo Alto, CA 94301}
\email{evan@axiommath.ai}
\email{ken@axiommath.ai}

\address{Department of Mathematics, Brown University, Providence, RI 02912}
\email{res@math.brown.edu}

\address{Department of Mathematics, University of Rochester, Rochester, NY 14627}
\email{dinesh.thakur@rochester.edu}

\thanks{Richard E. Schwartz's research is supported by NSF grant  DMS-2505281.}

\begin{document}

\begin{abstract}
  Start with four digits, arrange them in both descending and ascending order, subtract, and repeat.
  This simple process is known as the Kaprekar routine,
  famous in base ten for sending every nonconstant four-digit string to $6174$.
  We show that in every odd base $B>3$, the four-digit Kaprekar map has an unexpectedly rigid structure.
  After at most three iterations,
  every nonconstant orbit enters an explicit triangular region $\mathcal{T}_B$,
  and on this region the map is conjugate to projective doubling:
  \[ \{[r],[s]\}\longmapsto \{[2r],[2s]\}.  \]
  This gives a complete finite description of all nonconstant terminal cycles,
  including an explicit formula for their lengths and counts.
  In particular, the longest terminal cycle has length at most $(B-1)/2$,
  and equality can occur only when $B$ is prime.
  For primes $p>5$, equality occurs precisely when the least positive $m$ with
  $2^m\equiv\pm1\pmod p$ is $m=(p-1)/2$.
  The results proved here were first formulated by Schwartz and Thakur.
  As a test case for AI-assisted formal mathematics,
  AxiomProver produced Lean/mathlib formalizations of these results.
\end{abstract}

\subjclass[2020]{Primary 11A63; Secondary 05A15, 37P99}

\keywords{Kaprekar dynamics, digit dynamics, finite dynamical systems}

\maketitle
\enlargethispage{2pt}

\section{Introduction and main results}
\subsection{Background}
Kaprekar's four-digit routine looks, at first, like a numerical magic trick.
Take four digits, arrange them in decreasing order, subtract the same digits in increasing order, and repeat.
In base ten, the trick is famously dramatic: the process is drawn to the constant $6174$,
which is a fixed point since
\[ 7641-1467=6174.  \]
For example, starting with the number $2808$, we have the process
\[ 2808\longmapsto 8532\longmapsto 6174\longmapsto 6174.  \]
The surprise is not merely that a fixed point appears,
but that a rule made from sorting digits is secretly governed by modular arithmetic.
This paper explains that hidden arithmetic for four digits in every odd base.

Kaprekar introduced this routine in his study of decimal digit phenomena \cite{Kaprekar1949,Kaprekar1955}.
In the classical four-digit decimal case, every nonconstant digit string reaches the fixed point $6174$.
For 3 decimal digits, it is $495$.
The decimal case, however, is only one member of a larger family.
In other bases the four-digit routine need not have a single attracting fixed point.
Its terminal behavior may consist of several cycles, and their lengths depend on the arithmetic of the base.

The generalized Kaprekar problem has been studied from several points of view. Moreover, the fixed points of Kaprekar mappings define the integer sequence \href{https://oeis.org/A099009}{OEIS A099009}.
Jordan studied self-producing digit sequences \cite{Jordan}.
Hasse and Prichett determined the four-digit Kaprekar constants (i.e.,
fixed points for general bases) \cite{HassePrichett}.
Prichett analyzed five-digit terminating cycles \cite{Prichett}.
Thakur developed a broader framework for Kaprekar phenomena in arbitrary bases
and digit lengths \cite{Thakur}.
Devlin and Zeng studied maximum distances to the four-digit fixed point in the
bases where such a fixed point exists \cite{DevlinZeng}.

More recently, Yamagami and Matsui developed a detailed theory of $3$-adic
Kaprekar loops and obtained formulas for Kaprekar fixed points
\cite{YamagamiMatsuiLoops,YamagamiMatsuiConstants}.
Kay and Downes-Ward extended fixed-point and cycle questions to odd bases using
Kaprekar indices and subgroup/coset methods, and their sequel treats even bases,
where the classification is more intricate \cite{KayDownesWardOdd,KayDownesWardEven}.
Our purpose here is more explicit.
In the four-digit odd-base case, the right coordinates reveal a particularly simple mechanism:
after a bounded pre-periodic segment, the Kaprekar map is just projective doubling.
The resulting proof is short, but the simplification is quite striking,
and it gives a transparent route to the cycle-length consequences below while
keeping the digit dynamics visible throughout.

\subsection{Main classification result}
Given a $4$-digit positive integer in an odd base $B$,
let $a_1 \ge a_2 \ge a_3 \ge a_4$ denote its digits in non-increasing order.
As noted by Hasse and Prichett in \cite{HassePrichett},
the next Kaprekar iterate depends only on the two differences
\begin{equation}\tag{D}\label{eq:intro-difference-coordinates}
  d_1=a_1-a_4
  \qquad\text{and}\qquad
  d_2=a_2-a_3.
\end{equation}
Thus $d_1$ records the outer difference and $d_2$ records the inner difference.
Accordingly, we define a natural state space
\begin{equation}
  \tag{X} \label{eq:X_B}
  X_B \coloneq \{(d_1,d_2):1\leq d_1<B,\ 0\leq d_2\leq d_1\}.
\end{equation}
The Kaprekar routine thus induces a map $K_B \colon X_B\to X_B$,
sending each difference pair to the next difference pair.
Note here we have excluded the trivial case $a_1 = a_2 = a_3 = a_4$ by requiring $d_1 \neq 0$.

In \Cref{sec:transient} we will prove that, after a short pre-periodic segment,
these two differences become positive, unequal, and odd; that is, $(d_1,d_2)$
will lie in the stable odd region defined by
\begin{equation}\tag{S}\label{eq:stable-chamber}
  \TB \coloneq \{(d_1,d_2)\in X_B: d_1>d_2>0
    \text{ and } d_1\equiv d_2\equiv1\pmod2\}.
\end{equation}
We will also describe how $K_B$ acts on $\TB$.
To give this description, rather than working with $d_1$ and $d_2$ themselves,
we'll want to work with their half-sum and half-difference:
\begin{equation}\tag{R}\label{eq:intro-rs-coordinates}
  r=\frac{d_1+d_2}{2}
  \qquad\text{and}\qquad
  s=\frac{d_1-d_2}{2}.
\end{equation}
(Note that $r > s > 0$, and that $r$ and $s$ are integers because $d_1$ and $d_2$ are odd.)

For pairs $(r,s)$ we will pass to \emph{projective residue classes modulo $B$},
where each residue is identified with its negative.
Introduce the notation
\[ \PB \coloneq \left( (\ZZ/B\ZZ) \setminus \{0\} \right) / \{\pm1\}  \]
for the nonzero projective residue classes.
For nonzero $x \in \ZZ/B\ZZ$, let $[x] \coloneq \{x,-x\} \in \PB$
denote the corresponding residue class in $\PB$.
Finally, let $\binom{\PB}{2}$ denote the (unordered) two-element subsets of $\PB$.


We can now state the main structural result.
\begin{theorem}[Structural classification]\label{thm:structural}
  Let $B>3$ be odd, and let $K_B \colon X_B\to X_B$ be the four-digit
  Kaprekar map in difference coordinates.
  Then $\TB$, as defined in \eqref{eq:stable-chamber}, is forward-invariant and
  \[ K_B^3(X_B)\subseteq \TB. \]
  Moreover, the restricted map $K_B \colon \TB\to \TB$ is conjugate to the doubling map
  \[ \binom{\PB}{2} \to \binom{\PB}{2} \quad \text{ by }\quad
  \{[r],[s]\}\longmapsto \{[2r],[2s]\} \]
  under the transformation defined in \eqref{eq:intro-rs-coordinates}.
\end{theorem}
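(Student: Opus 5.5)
The plan is to compute $K_B$ explicitly in difference coordinates and read off both claims from a single formula. Write $D-A$ for descending minus ascending. A direct expansion gives $D-A=d_1(B^3-1)+d_2(B^2-B)$.

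\textbf{Step 1: the digits of $D-A$.} If $d_2\ge1$, borrowing shows that $D-A$ has base-$B$ digits $d_1,\ d_2-1,\ B-1-d_2,\ B-d_1$, and all four lie in $[0,B-1]$. If $d_2=0$, the digits are $d_1-1,\ B-1,\ B-1,\ B-d_1$.

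\textbf{Step 2: $(r',s')$ as split differences.} For sorted digits $w_1\ge w_2\ge w_3\ge w_4$, the new coordinates are
\[ r'=\tfrac{(w_1+w_2)-(w_3+w_4)}{2},\qquad s'=\tfrac{(w_1+w_3)-(w_2+w_4)}{2}. \]
The third quantity $\tfrac{|(w_1+w_4)-(w_2+w_3)|}{2}$ is never larger than either of these. So $r'$ and $s'$ are the two largest of the three half-differences attached to the three ways of splitting the digit multiset into two pairs.

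\textbf{Step 3: the three split values when $d_2\ge1$.} With $x=d_1$ and $y=d_2-1$, the digits are $x,\ B-x,\ y,\ B-2-y$. The three half-differences, in absolute value, are:
\begin{itemize}
\item $1$, from the split $\{x,B-x\}\mid\{y,B-2-y\}$;
\item $|d_1+d_2-B|=|2r-B|$, from $\{x,y\}\mid\{B-x,B-2-y\}$;
\item $d_1-d_2=2s$, from $\{x,B-2-y\}\mid\{B-x,y\}$.
\end{itemize}

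\textbf{Step 4: the doubling formula on $\TB$.} On $\TB$ we have $s\ge1$, so $2s\ge2$, and $|2r-B|\ge1$ because $B$ is odd. Hence $1$ is the smallest of the three values and
\[ \{r',s'\}=\{|2r-B|,\,2s\}. \]
Since $|2r-B|\equiv\pm2r\pmod B$, this says $\{[r'],[s']\}=\{[2r],[2s]\}$.

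Forward invariance also follows here. We get $d_1'=|2r-B|+2s$, which is odd, so $d_2'$ is odd as well. Moreover $r'\ne s'$ because the two values have opposite parity, giving $d_1'>d_2'>0$.

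\textbf{Step 5: $(r,s)\mapsto\{[r],[s]\}$ is a bijection $\TB\to\binom{\PB}{2}$.} Points of $\TB$ satisfy $r>s>0$ and $r+s=d_1<B$, so $2s<B$ and $[r]\ne[s]$. Conversely, take two distinct classes with representatives $a>b$ in $[1,(B-1)/2]$. Then $s$ must equal $b$, since $s<B/2$. The coordinate $r$ must be $a$ or $B-a$, and both choices satisfy $r+s<B$. Exactly one of them makes $r+s$ odd, because $B$ is odd. This gives a unique preimage, and together with Step 4 it yields the conjugacy.

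\textbf{Step 6: the transient bound $K_B^3(X_B)\subseteq\TB$.} This is the part I expect to be the main obstacle, since it needs case analysis rather than a clean formula. The same split formula applies off $\TB$. For $d_2\ge1$ the split values are again $1$, $|d_1+d_2-B|$ and $d_1-d_2$, but now some may be $0$ or coincide, and parity is not yet controlled. For $d_2=0$ the digits $d_1-1,B-1,B-1,B-d_1$ give split values $|d_1-B+1|$, $|B-d_1|$ and $1$ up to checking, to be computed similarly.

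I would then argue as follows. After one step, $d_1'=r'+s'$ is a sum of two of these values, and in most cases one of them has each parity, which makes $(d_1',d_2')$ odd and distinct. I would isolate the exceptional inputs where the two largest values have equal parity, or where a zero forces $d_2'=0$. Examples are $d_1=d_2$, $d_1+d_2=B$, and the $d_2=0$ family. I would track each exceptional family through one or two further steps by the same formula, checking that it reaches $\TB$ within three iterations in total. The hypothesis $B>3$ enters here, to rule out degenerate small cases where all values collapse to $\{0,1\}$.
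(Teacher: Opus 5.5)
Your Steps 1--5 are correct. They give a cleaner proof of the conjugacy than the paper's. Identifying $r',s'$ as the two largest of the three split half-differences yields $\{r',s'\}=\{|2r-B|,2s\}$ on $\TB$ at once. This is equivalent to the Hasse--Prichett formula the paper only cites in a remark, and it replaces the ordering table and the separate treatment of the row $A\ge D\ge E\ge C$. One small fix in Step 5: $s<B/2$ only gives $s\in\{a,b\}$. To rule out $s=a$, note that $r>a>b$ would force $r=B-b$, and then $r+s=B+a-b>B$.

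The genuine gap is Step 6. The inclusion $K_B^3(X_B)\subseteq\TB$ is one of the theorem's assertions, and you give only a plan for it. Moreover, the one concrete input in that plan is wrong. For $d_2=0$, the split values of the digits $d_1-1,B-1,B-1,B-d_1$ are not $|d_1-B+1|$, $|B-d_1|$, $1$. They are $\tfrac{B-1}{2}$ (from $\{d_1-1,B-d_1\}\mid\{B-1,B-1\}$) and $\sigma=|d_1-\tfrac{B+1}{2}|$ (from each of the other two splits). Hence $K_B(d_1,0)=(\tfrac{B-1}{2}+\sigma,\tfrac{B-1}{2}-\sigma)$. In base $11$, for example, $(6,0)\mapsto(5,5)$, while your values would give $(9,1)$. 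That error would hide precisely the orbit that needs all three steps.

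What must be shown is short in your framework.

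\begin{itemize}
\item For $d_2\ge1$, put $e=|d_1+d_2-B|$ and $o=d_1-d_2$. Since $e+o\equiv 2d_1-B$ is odd, $e$ and $o$ always have opposite parity. So whenever both are $\ge1$, the two largest split values are $e,o$ and $K_B(d_1,d_2)\in\TB$.
\item The only failures for $d_2\ge1$ are $o=0$ (i.e.\ $d_1=d_2$) and $e=0$ (i.e.\ $d_1+d_2=B$). In both, the image is $(h+1,h-1)$ with $h=\max(e,o)$ odd. This enters $\TB$ one step later. For $h\ge3$ it has unequal positive coordinates with sum $2h\ne B$. For $h=1$ it is $(2,0)\mapsto(B-2,1)$, which is where $B>3$ is used.
\item For $d_2=0$, the image $(\tfrac{B-1}{2}+\sigma,\tfrac{B-1}{2}-\sigma)$ has coordinate sum $B-1\ne B$. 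So it reaches $\TB$ after at most one more step, with two exceptions:
  \begin{itemize}
  \item $\sigma=\tfrac{B-1}{2}$, i.e.\ $d_1=1$, giving $(1,0)\mapsto(B-1,0)\mapsto(B-2,1)$;
  \item $\sigma=0$, i.e.\ $d_1=\tfrac{B+1}{2}$, giving $(\tfrac{B+1}{2},0)\mapsto(\tfrac{B-1}{2},\tfrac{B-1}{2})\mapsto(2,0)\mapsto(B-2,1)$, where the third iteration is genuinely needed.
  \end{itemize}
\end{itemize}

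This bookkeeping is the content of the paper's three cases ($d_1>d_2>0$, $d_1=d_2$, $d_2=0$). Without it, the exponent $3$ is asserted rather than proved.
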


Here, in the context of finite dynamical systems,
we explicate the term ``conjugate'' as follows.
Let $f \colon X\to X$ and $g \colon Y\to Y$ be maps.
We say that $f$ and $g$ are \emph{conjugate} under the bijection
$\Phi \colon X\to Y$ if
\[ \Phi(f(x))=g(\Phi(x)) \qquad\text{for all }x\in X.  \]
Thus $\Phi$ is a relabeling of the states which turns one system into the other.
In the particular case of \Cref{thm:structural},
the map $\Phi \colon \TB \to \binom{\PB}{2}$
is given by \eqref{eq:intro-rs-coordinates},
and so \Cref{thm:structural} makes an implicit promise that
\eqref{eq:intro-rs-coordinates} is bijective.

We can thus describe \Cref{thm:structural} in English as follows:
After at most three iterations,
every orbit enters our smaller stable region $\TB$.
And on this region $\TB$, the sorting-and-subtracting rule is no longer mysterious!
After the change of variables in \eqref{eq:intro-rs-coordinates},
it is exactly multiplication by $2$, with signs ignored,
on unordered pairs of nonzero projective residue classes.

\begin{remark}
  Here the case $B = 3$ needs to be excluded because $\TB$ is empty when $B = 3$.
\end{remark}

\subsection{Cycle lengths}
It is clear that for odd $B$ the doubling map
\[ \binom{\PB}{2} \to \binom{\PB}{2} \text{ by } \{[r],[s]\}\longmapsto \{[2r],[2s]\} \]
is bijective.
Since conjugate systems have the same cycle lengths,
we get the following corollaries as the main consequences.
They are stated separately so that the dynamical statement and the arithmetic
consequences are easy to distinguish.

\begin{corollary}[Cycle-length bound]\label{cor:length}
  If $B>3$ is odd and $c_{\max}(B)$ is the largest terminal-cycle length for the
  four-digit Kaprekar map in base $B$, then we have
  \[ c_{\max}(B)\le \frac{B-1}{2}.  \]
  Moreover, equality holds if and only if $B \ge 7$, $B$ is prime,
  and the least positive solution to $2^m \equiv \pm 1 \pmod B$ is $m = \frac{B-1}{2}$.
\end{corollary}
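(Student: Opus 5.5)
By \Cref{thm:structural}, every periodic point of $K_B$ lies in $K_B^3(X_B)\subseteq \TB$. Conjugate systems have the same cycle lengths, so the terminal cycles of $K_B$ are exactly the cycles of the doubling map $D\colon\{[r],[s]\}\mapsto\{[2r],[2s]\}$ on $\binom{\PB}{2}$. Since $D$ is a bijection, every element of $\binom{\PB}{2}$ lies on a cycle. The plan is to compute the period of a pair $\{[r],[s]\}$ directly and bound it by an arithmetic quantity.

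Let $m_B$ be the least positive $m$ with $2^m\equiv\pm1\pmod B$, i.e.\ the order of $2$ in $(\ZZ/B\ZZ)^\times/\{\pm1\}$. For $x\neq0$ in $\ZZ/B\ZZ$, put $B_x=B/\gcd(x,B)$. Then the period of $[x]$ under doubling on $\PB$ is $m_{B_x}$, the order of $2$ in $(\ZZ/B_x\ZZ)^\times/\{\pm1\}$, and this divides $m_B$ because reduction mod $B_x$ is a homomorphism.

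Now let $n$ be the period of a pair. Either $D^n$ fixes $[r]$ and $[s]$ individually, or it swaps them.
\begin{itemize}
\item In the non-swap case, $n=\operatorname{lcm}(m_{B_r},m_{B_s})$, which divides $m_B$.
\item In the swap case, $n$ is half the period of $[r]$, so $n\le m_B/2$.
\end{itemize}
Either way $n\le m_B\le \varphi(B)/2\le (B-1)/2$. This proves the bound. Equality $n=(B-1)/2$ forces the non-swap case, $\varphi(B)=B-1$ (so $B$ is prime), and $m_B=(B-1)/2$.

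For the converse, let $B=p\ge7$ be prime with $m_p=(p-1)/2$. Take the pair $\{[1],[2]\}$, which is a genuine two-element subset since $[1]\ne[2]$ for $p>3$. A return at time $n$ in the non-swap configuration needs $2^n\equiv\pm1$, giving $n=m_p$. A swap return needs $2^n\equiv\pm2$ and $2^{n+1}\equiv\pm1$. That forces $2^{n-1}\equiv\pm1$ and $2^{n+1}\equiv\pm1$, hence $4\equiv\pm1\pmod p$, i.e.\ $p\in\{3,5\}$, which is excluded. So the period is exactly $(p-1)/2$, and it is realized by the corresponding point of $\TB$ via \eqref{eq:intro-rs-coordinates}.

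It remains to treat the small cases separately to justify the condition $B\ge7$. For $B=5$ we have $m_5=2=(5-1)/2$, but $\PB=\{[1],[2]\}$ has only two elements, so $\binom{\PB}{2}$ is a single fixed pair and $c_{\max}(5)=1<2$. The case $B=3$ is excluded by hypothesis.

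The step needing the most care is the bookkeeping of the swap versus non-swap cases and the reduction to the divisors $B_x$ when $\gcd(x,B)>1$. Making these two points precise is what yields both the inequality and the sharp equality criterion.
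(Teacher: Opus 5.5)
Your proposal is correct and follows essentially the paper's route. Your $m_B$ is the paper's $\lambda(B)$, and you use the same chain $c_{\max}(B)\le\lambda(B)\le\varphi(B)/2\le(B-1)/2$ with equality forcing $\varphi(B)=B-1$, the same separate check $c_{\max}(5)=1$, and for the converse a pair at cyclic distance $1$ whose orbit has the full length $(p-1)/2$. Your direct $\{[1],[2]\}$ computation, which rules out a swap return via $4\equiv\pm1\pmod p$, is the paper's cyclic-distance argument specialized to distance $1$; the swap/non-swap bookkeeping in your bound is harmless but not needed, since $D^{\lambda(B)}$ is already the identity on $\binom{\PB}{2}$.
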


\begin{remark}
  In the edge case $B = 5$, one has $c_{\max}(5)=1<2=(B-1)/2$ instead
  (see \cite[Lemma 1]{HassePrichett}, or \cite[\S4 and Appendix B]{KayDownesWardOdd}).
\end{remark}

Thus \cref{cor:length} gives a universal ceiling
and describes exactly when it is achieved.
We also have the following result:

\begin{corollary}[Prime bases and maximal cycles]\label{cor:primecriterion}
  If equality holds in \Cref{cor:length} (so that $B$ is prime),
  then the number of terminal cycles of length $c_{\max}(B)$ is
  \[ \left\lfloor\frac{c_{\max}(B)-1}{2}\right\rfloor.  \]
\end{corollary}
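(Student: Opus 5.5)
Since equality holds in \Cref{cor:length}, $B=p\ge 7$ is prime and the least $m\ge1$ with $2^m\equiv\pm1\pmod p$ is $m=(p-1)/2$. By \Cref{thm:structural}, the terminal cycles of $K_B$ are exactly the cycles of the restriction $K_B|_{\TB}$. This restriction is conjugate to the doubling map $\{[r],[s]\}\mapsto\{[2r],[2s]\}$ on $\binom{\PB}{2}$, so it suffices to count cycles of length $c=(p-1)/2$ of doubling on unordered pairs.

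\textbf{Step 1: the action on $\PB$.} Since $p$ is prime, $(\ZZ/p\ZZ)^\times$ is cyclic of order $p-1$. Hence $\PB=(\ZZ/p\ZZ)^\times/\{\pm1\}$ is cyclic of order $(p-1)/2=c$. The class $[2]$ has order exactly $c$, by the hypothesis on $m$. So $[2]$ generates $\PB$, and multiplication by $[2]$ permutes $\PB$ as a single $c$-cycle. Identify $\PB$ with $\ZZ/c\ZZ$ via $[2^i]\leftrightarrow i$. Doubling then becomes the translation $i\mapsto i+1$, and on pairs it becomes $\{i,j\}\mapsto\{i+1,j+1\}$.

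\textbf{Step 2: orbits of the translation on pairs.} The group $\ZZ/c\ZZ$ acts on the $\binom{c}{2}$ unordered pairs of distinct elements. The orbit of $\{i,j\}$ is determined by the difference $j-i$ up to sign, namely $\delta\in\{1,\dots,\lfloor c/2\rfloor\}$.

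A pair $\{i,j\}$ is fixed by the translation $t$ exactly when either $t=0$, or $i+t=j$ and $j+t=i$. The second case means $2t=0$ with $t\ne0$, i.e.\ $t=c/2$ and $\delta=c/2$. So for $\delta<c/2$ the stabilizer is trivial and the orbit has length $c$. When $c$ is even, $\delta=c/2$ gives an orbit of length $c/2$.

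Counting the full-length orbits gives
\[
\Big\lfloor \tfrac{c}{2}\Big\rfloor-[\,c\text{ even}\,]=\Big\lfloor\tfrac{c-1}{2}\Big\rfloor .
\]
Every orbit has length $c$ or $c/2$, so $c$ is indeed the maximum length, consistent with $c_{\max}(B)=c$. This yields the stated count.

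\textbf{Main obstacle.} The only real issue is the bookkeeping in Step 2: handling the antipodal pairs when $c$ is even, and making sure the conjugacy of \Cref{thm:structural} transfers the cycle count exactly. The transfer holds because a conjugacy by a bijection maps cycles to cycles of the same length.
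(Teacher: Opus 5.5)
Your proposal is correct and follows essentially the same route as the paper: the conjugacy reduces the count to doubling on $\binom{P_p}{2}$. The hypothesis $\lambda(p)=(p-1)/2$ makes doubling a single $q$-cycle on $P_p$, unordered pairs are classified by cyclic distance, and when $q$ is even the antipodal distance gives the only orbit shorter than $q$. Your stabilizer computation just spells out the orbit-length claim that the paper asserts directly. The appeal to cyclicity of $(\ZZ/p\ZZ)^\times$ is harmless but unnecessary, since an element of order $q$ in a group of order $q$ already generates it.
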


\begin{remark}
  Our results are a four-digit complement to the broader odd-base theory.
  Yamagami and Matsui give a detailed theory in base $3$ \cite{YamagamiMatsuiLoops},
  and Kay and Downes-Ward study fixed points and cycles in odd bases using Kaprekar indices,
  with regular cycles organized through subgroup and coset data \cite{KayDownesWardOdd}.
  Their framework treats arbitrary digit-counts and is especially well suited to regular cycles.
  The present note fixes the digit-count at four.
  In this small case, the dynamics are more directly described by the outer and inner digit differences,
  and after a bounded pre-periodic segment these two differences give a stable region on
  which the map becomes projective doubling.
  Therefore, the contribution here is a low-dimensional model that makes the
  four-digit odd-base dynamics and their cycle counts explicit.
\end{remark}

\subsection*{Provenance and formal verification}
The structural conjectures leading to Theorem~\ref{thm:structural}
and its cycle-length consequences were first formulated by Schwartz and
Thakur and communicated to Ono. AxiomProver was then used as an
experimental AI-assisted formal mathematics system to produce
Lean/mathlib formalizations of the resulting statements. The present
paper gives a human-readable account of the mathematics. The formal
files, the Lean environment, and the verification protocol are described
in Section~\ref{sec:AI}. AxiomProver is credited as a tool rather than
as an author, and the human authors take responsibility for the
mathematical statements, exposition, attribution, and correctness of the
paper.

\subsection*{Examples of the main results}
\label{ex:main-examples}
Here are a few examples that illustrate these results.
The cycles are controlled not by decimal accidents,
but by the order of $2$ modulo the base, with signs ignored.
We start each example with ordinary four-digit strings,
and only then pass to the difference and projective coordinates.

\begin{example}[The case $B = 7$; {\cite[\S5]{KayDownesWardOdd}}]
  The sorted four-digit string $3100$ has
  \[
        d_1=3-0=3
        \qquad\text{and}\qquad
        d_2=1-0=1.
  \]
  Thus
  \[
        r=\frac{3+1}{2}=2
        \qquad\text{and}\qquad
        s=\frac{3-1}{2}=1.
  \]
  Writing each iterate as a sorted digit string, the orbit of $3100$ is
  \[
        3100\longmapsto 5430\longmapsto 5520\longmapsto 5322\longmapsto 5430\longmapsto\dots,
  \]
  which in difference coordinates is the terminal cycle
  \[
        (3,1)\longmapsto (5,1)\longmapsto (5,3)\longmapsto (3,1).
  \]
  Here $(B-1)/2=3$, and $2$ has projective order $3$ modulo $7$,
  so the largest four-digit Kaprekar cycles have length $3$.
  This agrees with the count $\lfloor(3-1)/2\rfloor=1$.
\end{example}

\begin{example}[The case $B = 11$]
  The sorted four-digit string $5100$ has $(d_1,d_2)=(5,1)$, and hence
  \[
        r=3
        \qquad\text{and}\qquad
        s=2.
  \]
  Writing each iterate as a sorted digit string, the orbit of $5100$ is
  \[
        5100\longmapsto 9650\longmapsto 9920\longmapsto 9632\longmapsto 7742\longmapsto 7652\longmapsto 9650\longmapsto\dots.
  \]
  In difference coordinates this is the terminal cycle
  \[
        (5,1)\to(9,1)\to(9,7)\to(7,3)\to(5,3)\to(5,1).
  \]
  Since $2^5\equiv -1\pmod {11}$ and no smaller positive power of $2$ is congruent to $\pm1$,
  we have $c_{\max}(11)=5$.  There are
  \[
        \left\lfloor\frac{5-1}{2}\right\rfloor=2
  \]
  cycles of length five; the other one is
  \[
        (3,1)\to(9,5)\to(7,1)\to(9,3)\to(7,5)\to(3,1).
  \]
\end{example}

\begin{example}[The case $B = 9$]
  The composite base $B=9$ shows why the inequality is usually strict.
  Starting from the sorted string $5100$, we have $(d_1,d_2)=(5,1)$ and $(r,s)=(3,2)$.
  The terminal behavior in difference coordinates is
  \[
        (5,1)\to(7,1)\to(7,5)\to(5,1),
  \]
  with the second cycle
  \[
        (3,1)\to(7,3)\to(5,3)\to(3,1).
  \]
  Here $(B-1)/2=4$, but $2^3\equiv -1\pmod 9$, so the terminal cycles have length $3$, not $4$.
\end{example}

\begin{example}[The case $B = 17$]
  Finally, a prime base need not give equality.
  For $B=17$, the sorted four-digit string $5100$ again has $(d_1,d_2)=(5,1)$ and $(r,s)=(3,2)$,
  but $2^4\equiv -1\pmod {17}$.  Thus the largest terminal cycles have length $4$, not $(17-1)/2=8$.
\end{example}

\medskip
The rest of the paper is organized as follows.
\Cref{sec:transient} introduces the difference coordinates and proves that every orbit
reaches the stable odd region after at most three steps.
\Cref{sec:projective} identifies the hidden projective coordinates and proves that, on this region,
one Kaprekar step is just doubling in those coordinates.
\Cref{sec:cycle} translates the projective model into explicit cycle-count formulas,
and then deduces the general cycle-length bound and the prime-base obstruction to equality.
\Cref{sec:prime} treats prime bases in detail and counts the largest cycles when the bound is attained. Section~\ref{sec:AI} describes the accompanying Lean/mathlib
formalization, including the AxiomProver protocol, the formal files,
and the verification environment.

\section*{Acknowledgments}
\noindent
The authors thank Rub\'en Ballester and Andrew Granville for helpful comments regarding this project,
and Granville in particular for suggesting promising directions for future work that generalize these results.

\section{Difference coordinates for four digits}
\label{sec:transient}
We now turn from the statement of the result to the mechanism behind it.
The first step is to reduce the digit operation to a map on two integers.
This reduction is useful because the middle information in a sorted four-digit
string cancels during subtraction.

\subsection{Difference coordinates}
Fix an odd base $B>3$.  Let
\[
        \overrightarrow N=[a_1,a_2,a_3,a_4]
        \qquad\text{with}\qquad
        a_1\ge a_2\ge a_3\ge a_4
\]
be the decreasing rearrangement of a four-digit string.
We ignore constant strings, since they map to zero.
As in \eqref{eq:intro-difference-coordinates}, let $d_1 \coloneq a_1 - a_4$
and $d_2 \coloneq a_2 - a_3$ so that
\[
        1\le d_1<B
        \qquad\text{and}\qquad
        0\le d_2\le d_1.
\]
Then Kaprekar subtraction depends only on $(d_1,d_2)$, as in \cite{HassePrichett}.
Indeed,
\[
\begin{aligned}
  &(a_1B^3+a_2B^2+a_3B+a_4)
  -(a_4B^3+a_3B^2+a_2B+a_1)       \\
  &\hspace{1cm}=d_1(B^3-1)+d_2(B^2-B).
\end{aligned}
\]
Thus the future orbit after one step is determined by the pair $(d_1,d_2)$.

If $d_2>0$, ordinary base-$B$ subtraction gives the unsorted output digits
\begin{equation}\label{eq:d2positive}
        [d_1,\ d_2-1,\ B-d_2-1,\ B-d_1].
\end{equation}
If $d_2=0$, the subtraction has one fewer borrow, and the unsorted output digits are
\begin{equation}\label{eq:d2zero}
        [d_1-1,\ B-1,\ B-1,\ B-d_1].
\end{equation}
Let $K_B$ denote the induced map on $X_B$ as defined in \eqref{eq:X_B}.
To compute $K_B(d_1,d_2)$, one sorts the four entries in \eqref{eq:d2positive}
or \eqref{eq:d2zero} in decreasing order and then takes the outer and inner differences.
This passage to difference coordinates does not lose any eventual cycle lengths.
Indeed, a digit-level orbit determines a difference-coordinate orbit,
and conversely a periodic orbit in difference coordinates lifts to a digit-level
periodic orbit by the subtraction formulas \eqref{eq:d2positive} and \eqref{eq:d2zero}.
If two digit strings in such a lifted orbit were equal, their difference pairs would also be equal,
so the period is the same.  Thus it is enough to count terminal cycles for $K_B$ on $X_B$.

\subsection{Escaping the pre-period}
The rest of this section is a cleanup step.
We show that the boundary phenomena disappear after a bounded number of iterations,
leaving a simple stable region on which the projective model will live.

\begin{lemma}[Pre-period into the stable odd region]\label{lem:transient}
  For every odd $B>3$,
  \[ K_B^3(X_B)\subseteq \TB.  \]
  Moreover $\TB$ is forward invariant under $K_B$.
\end{lemma}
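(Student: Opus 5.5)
The approach is to compute $K_B$ in closed form from the unsorted output digits \eqref{eq:d2positive} and \eqref{eq:d2zero}, and to read off its parity and ordering behaviour directly.

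\textbf{The case $d_2>0$.} The four entries in \eqref{eq:d2positive} split into two complementary pairs.
\begin{itemize}
\item $\{d_1,\,B-d_1\}$ is an interval centred at $B/2$ of length $u\coloneq|2d_1-B|$.
\item $\{d_2-1,\,B-1-d_2\}$ is an interval centred at $(B-2)/2$ of length $v\coloneq|B-2d_2|$.
\end{itemize}
Because $B$ is odd, $u$ and $v$ are odd and positive. The two centres differ by exactly $1$. Hence when $|u-v|\ge 2$ one interval is nested inside the other, and sorting gives
\[ K_B(d_1,d_2)=(\max(u,v),\ \min(u,v)). \]
The only other possibility is $u=v$. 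Then the intervals are translates of each other by $1$, and the output is $(u+1,u-1)$, whose coordinates are even.

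\textbf{Forward invariance of $\TB$.} On $\TB$ we have $d_2>0$, so the formula above applies. The degenerate case $u=v$ would force $d_1=d_2$ or $d_1+d_2=B$. The first is excluded by $d_1>d_2$. The second is excluded because $d_1+d_2$ is even on $\TB$ while $B$ is odd. Therefore $K_B(d_1,d_2)=(\max(u,v),\min(u,v))$ with both coordinates odd and positive and the first strictly larger. It also lies in $X_B$, since $u,v\le B-2<B$. This proves $K_B(\TB)\subseteq\TB$.

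\textbf{Reaching $\TB$ in three steps.} By the computation above, every point with $d_2>0$ and $u\ne v$ lands in $\TB$ after one step. It remains to track the two exceptional inputs.
\begin{itemize}
\item For $d_2=0$, sort \eqref{eq:d2zero}: the entries are $B-1,\,B-1,\,\max(d_1-1,B-d_1),\,\min(d_1-1,B-d_1)$. This gives $K_B(d_1,0)=(B-1-\min(d_1-1,B-d_1),\ B-1-\max(d_1-1,B-d_1))$, which I will rewrite in the form $(\cdot,\cdot)$ with the difference $|2d_1-B-1|$ appearing.
\item For $u=v$ (that is, $d_1=d_2$ or $d_1+d_2=B$), the output is $(u+1,u-1)$ with both entries even and $u-1$ possibly $0$.
\end{itemize}
In each case I will apply the general formula once or twice more. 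The step I expect to be the main obstacle is checking that these exceptional images never return to an exceptional configuration more than twice in a row. Concretely, I will check that an even pair $(e_1,e_2)$ with $e_1\ne e_2$ has $e_1+e_2\ne B$ by parity, so it is non-degenerate unless $e_2=0$. Its image is therefore an odd pair in $\TB$. The one chain needing care is $d_2=0$ (or $u=v=1$) producing a point with $d_2=0$ or an even pair, then one more step producing an odd non-degenerate pair. This accounts for the bound of three. I would verify the small cases such as $(1,0)$, $(B-1,0)$ and $(1,1)$ by direct computation, using $B>3$ to guarantee that the resulting odd pair has $d_1>d_2>0$.
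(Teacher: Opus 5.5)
Your treatment of $d_2>0$ is correct, and it is a different route from the paper's. The paper sorts the digits $d_1,\ d_2-1,\ B-d_2-1,\ B-d_1$ by tabulating five possible orderings, then handles $d_1=d_2$ and $d_2=0$ as separate cases. Your observation that the two complementary digit pairs are symmetric intervals with centres exactly $1$ apart collapses that table into one closed form:
\[
K_B(d_1,d_2)=\begin{cases}(\max(u,v),\min(u,v)), & u\neq v,\\ (u+1,u-1), & u=v,\end{cases}
\]
where $u=v$ means $d_1=d_2$ or $d_1+d_2=B$. This is the Hasse--Prichett formula that the paper only cites in a remark after the doubling proposition. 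Your route buys three things. It makes forward invariance immediate. It shows that every point with $d_2>0$ off the locus $u=v$ enters $\TB$ in one step, whereas the paper's first case is phrased by parity and allows two steps whenever $d_1\not\equiv d_2$. And it would give the conjugacy without a second table. The paper's table, by contrast, is a purely mechanical check that it reuses in $(r,s)$-coordinates.

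The part that is not yet a proof is $K_B^3(X_B)\subseteq\TB$. There are several problems with the sketch:
\begin{itemize}
\item You defer the analysis of $K_B(d_1,0)$.
\item Your even-pair argument covers only images with $e_1\neq e_2$.
\item The small cases you list miss the one configuration that needs three steps. Meanwhile $(1,1)$ is not special at all: it has $u=v=B-2$, goes to $(B-1,B-3)$, and then into $\TB$.
\item Bounding the number of consecutive exceptional configurations is not enough by itself, since a non-exceptional point need not lie in $\TB$.
\end{itemize}

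What is missing is information about the \emph{sum} of the coordinates, not their difference. Since $(d_1-1)+(B-d_1)=B-1$, we get $K_B(d_1,0)=(M,m)$ with $M=\max(d_1-1,B-d_1)$, $m=\min(d_1-1,B-d_1)$ and $M+m=B-1$. So $M\equiv m\pmod 2$ and $M+m\neq B$, which leaves four possibilities:
\begin{itemize}
\item $M>m>0$ odd: already in $\TB$.
\item $M>m>0$ even: then $u\neq v$, and your formula sends it into $\TB$.
\item $m=0$: this happens only for $d_1=1$, with $(1,0)\mapsto(B-1,0)\mapsto(B-2,1)$.
\item $M=m$: this happens only for $d_1=(B+1)/2$.
\end{itemize}
The last case is the genuine three-step transient, because the diagonal point has $u=v=1$:
\[
  \left(\tfrac{B+1}{2},0\right)\mapsto\left(\tfrac{B-1}{2},\tfrac{B-1}{2}\right)\mapsto(2,0)\mapsto(B-2,1)\in\TB.
\]
For $B=11$ this is $(6,0)\mapsto(5,5)\mapsto(2,0)\mapsto(9,1)$.

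Combine this with your treatment of $u=v\ge3$, whose image is an even pair $e_1>e_2>0$ that lands in $\TB$ at the next step. Add the case $u=v=1$, whose image is $(2,0)\mapsto(B-2,1)$, with $B>3$ giving $B-2>1$. Together these close the argument.
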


The proof of \Cref{lem:transient} will be divided into a few cases
resolved via explicit calculation of $K_B$.
This calculation also appears in \cite[equations (1.1)--(1.3)]{HassePrichett},
and \cite[\S2]{DevlinZeng}, although our organization of the cases is slightly different.

\begin{case}
  \Cref{lem:transient} holds in the case $d_1 > d_2 > 0$.
  In fact:
  \begin{itemize}
    \item If $d_1 > d_2 > 0$ and $d_1 \equiv d_2 \pmod 2$, then $K_B(d_1, d_2) \in \TB$.
    \item If $d_1 > d_2 > 0$ but $d_1 \not\equiv d_2 \pmod 2$, then $K_B^2(d_1, d_2) \in \TB$.
  \end{itemize}
  \label{case:d1_gt_d2_gt_0}
\end{case}
\begin{proof}
  [Proof for \Cref{case:d1_gt_d2_gt_0}]
  Abbreviate the four entries in \eqref{eq:d2positive} by
  \[
          A \coloneq d_1
          \qquad\text{and}\qquad
          C \coloneq d_2-1
          \qquad\text{and}\qquad
          D \coloneq B-d_2-1
          \qquad\text{and}\qquad
          E \coloneq B-d_1.
  \]
  We always have $A>C$ and $D\ge E$; also $A \neq E$ and $C \neq D$.
  We note the case $D > A > C > E$ is also impossible,
  because it would require both $d_1+d_2<B-1$ and $d_1+d_2>B+1$.
  Hence, we must be in one of the five situations listed in \Cref{tab:ACDE}.
  Of course, only rows whose displayed differences are nonnegative can actually occur.

  \begin{table}[ht]
    \[
    \begin{array}{ccc}
      \hline
      \text{Ordering} & K_B(d_1,d_2) & \text{In $\TB$?} \\
      \hline
      A \ge D \ge E \ge C & (d_1-d_2+1,\ d_1-d_2-1) & \text{Iff } d_1 \equiv d_2 \bmod 2 \\
      D \ge E > A > C & (B-2d_2,\ B-2d_1) & \text{Always} \\
      A \ge D > C \ge E & (2d_1-B,\ B-2d_2) & \text{Always} \\
      D \ge A > E \ge C & (B-2d_2,\ 2d_1-B) & \text{Always} \\
      A > C > D \ge E & (2d_1-B,\ 2d_2-B) & \text{Always} \\
      \hline
    \end{array}
    \]
    \caption{Possible results of $K_B(d_1, d_2)$ when $d_1 > d_2 > 0$.}
    \label{tab:ACDE}
  \end{table}

  We assert the last four rows of \Cref{tab:ACDE} always lie in $\TB$.
  Indeed, it is clear the entries are odd.
  Moreover, they are easily seen to be unequal;
  in the third row we have $C \ge E \implies d_1 + d_2 \ge B+1$
  while in the fourth row we have $D \ge A \implies d_1 + d_2 \le B-1$.
  Hence in the third and fourth row $d_1 + d_2 \neq B$ always holds
  and the two entries are unequal.

  It remains to only deal with the first row,
  which lies in $\TB$ exactly when $d_1 \equiv d_2 \pmod 2$.
  It suffices to show that
  \begin{equation}
    K_B(h+1, h-1) \in \TB \quad\text{for each odd}\quad h \ge 1.
    \label{eq:h_pm_1}
  \end{equation}
  Indeed \eqref{eq:h_pm_1} can be checked directly:
  \begin{itemize}
    \item If $h \neq 1$, we apply \Cref{tab:ACDE} a second time to the new pair $(h+1, h-1)$.
      For $(h+1, h-1)$, all five rows of \Cref{tab:ACDE} are in $\TB$ and we are done.
    \item If $h = 1$ then in fact $(h+1, h-1) = (2, 0)$.
      Although \Cref{tab:ACDE} does not apply, we can directly compute
      \[ K_B(2, 0) = (B-2, 1) \in \TB. \qedhere \]
  \end{itemize}
\end{proof}

\begin{case}
  We have $K_B^2(d,d) \in \TB$ for any $d > 0$.
  \label{case:d_d}
\end{case}
\begin{proof}
  [Proof for \Cref{case:d_d}]
  Formula \eqref{eq:d2positive} becomes \[ [d,d-1,B-d-1,B-d].  \]
  Then \[ K_B(d,d) = (h+1, h-1) \qquad\text{where}\qquad h \coloneq |B-2d|. \]
  This is the same as \eqref{eq:h_pm_1} from the earlier case,
  so we get $K_B(h+1,h-1) \in \TB$ as desired.
\end{proof}

\begin{case}
  We have $K_B^3(d_1, 0) \in \TB$ for any $d_1 > 0$.
  \label{case:d1_0}
\end{case}
\begin{proof}
  [Proof for \Cref{case:d1_0}]
  Put
  \[ u=d_1-1 \qquad\text{and}\qquad v=B-d_1.  \]
  Sorting \eqref{eq:d2zero} gives
  \begin{equation}\label{eq:zeroformula}
    K_B(d_1,0)=\bigl(B-1-\min(u,v),\ B-1-\max(u,v)\bigr).
  \end{equation}
  If the second coordinate in \eqref{eq:zeroformula} is zero, then $d_1=1$ and
  \[ (1,0)\mapsto (B-1,0)\mapsto (B-2,1)\in \TB.  \]
  If the image in \eqref{eq:zeroformula} has positive unequal coordinates,
  then \Cref{case:d1_gt_d2_gt_0} applies.
  If the image has equal coordinates, then \Cref{case:d_d} applies.
  Hence every state with $d_2=0$ reaches $\TB$ within at most three steps.
\end{proof}

We can now deduce the main lemma.

\begin{proof}
  [Proof of \Cref{lem:transient}]
  We note that $K_B(\TB) \subseteq \TB$ follows from the first half of \Cref{case:d1_gt_d2_gt_0},
  while $K_B^3(X_B) \subseteq \TB$ follows by noting that
  \Cref{case:d1_gt_d2_gt_0,case:d_d,case:d1_0} cover all possibilities.
\end{proof}

\begin{example}[The pre-periodic segment in base $11$]\label{ex:transient}
  Let $B = 11$.
  An example for \Cref{case:d_d} is
  \[ (3,3) \longmapsto (6,4) \longmapsto (3,1) \in \mathcal{T}_{11}.  \]
  An example for \Cref{case:d1_0} is
  \[ (6,0) \longmapsto (5,5)\longmapsto (2,0) \longmapsto (9, 1) \in \mathcal{T}_{11}.  \]
  These two computations illustrate why a short pre-periodic segment
  is needed before the simple projective model appears.
\end{example}

\begin{remark}
  \Cref{lem:transient} is analogous to Hasse-Prichett's \cite[Lemma 3.1]{HassePrichett},
  which defines the analog of $\TB$ for $B = 5 \cdot 2^n$.
  In our case of odd $B$ we are able to enforce a uniform bound of $3$
  on the number of applications of $K_B$ needed.
  But when $B = 5 \cdot 2^n$ the number of steps needed to reach a cycle
  can grow linearly in $n$ (see Devlin-Zeng \cite[Theorem 1.1]{DevlinZeng}).
\end{remark}

\section{The projective model}
\label{sec:projective}
The previous section deals with the pre-periodic segment.  We now reveal the hidden coordinates on the stable region.
They are not the differences themselves, but the half-sum and half-difference of the two differences,
viewed modulo sign.

Recall that $\PB=((\ZZ/B\ZZ)\setminus\{0\})/\{\pm1\}$.
Define
\begin{equation}\tag{P}\label{eq:Phi}
  \begin{aligned}
    \Phi \colon \TB &\to \binom{\PB}{2} \\
    \text{by } (d_1,d_2) &\mapsto
        \left\{\left[\frac{d_1+d_2}{2}\right],
        \left[\frac{d_1-d_2}{2}\right]\right\}.
  \end{aligned}
\end{equation}
The two classes are nonzero and distinct, because $d_1>d_2>0$ and $d_1<B$.
We first prove this is a bijection.

\begin{lemma}[Bijection with projective pairs]\label{lem:bijection}
  The map $\Phi \colon \TB \to \binom{\PB}{2}$ is a bijection.
\end{lemma}

\begin{proof}
  Define the intermediate set
  \[ \mathcal{R} \coloneq \{(r,s) \mid r > s > 0,\ r+s < B,\ r \not\equiv s \bmod 2\}. \]
  Recall the notation from \eqref{eq:intro-rs-coordinates}.
  We first contend that the map
  \begin{equation}
    \TB \to \mathcal{R} \qquad\text{by}\qquad (d_1, d_2) \mapsto (r, s)
    = \left( \frac{d_1+d_2}{2}, \frac{d_1-d_2}{2} \right)
    \label{eq:Phi1}
  \end{equation}
  is a (well-defined) bijection, with inverse given by $d_1 = r+s$ and $d_2 = r-s$.
  Indeed, the conditions on $\TB$ and $\mathcal{R}$ line up exactly:
  \begin{align*}
    d_1, d_2 \text{ odd} &\iff r \pm s \text{ odd} \\
    d_1 > d_2 &\iff s > 0 \\
    d_2 > 0 &\iff r > s \\
    B > d_1 &\iff r+s < B.
  \end{align*}
  This establishes \eqref{eq:Phi1} is indeed a bijection.

  So, it remains to prove that the projection
  \begin{equation}
    \mathcal{R} \to \binom{\PB}{2} \qquad\text{by}\qquad (r,s) \mapsto \{[r], [s]\}
    \label{eq:Phi2}
  \end{equation}
  is also a bijection.
  To that end, suppose $\{[u], [v]\} \in \binom{\PB}{2}$ is an arbitrary
  pair of distinct projective classes, with $0 < u < B$ and $0 < v < B$.
  Since $[u] \neq [v]$ we also have $u \neq v$ and $u+v \neq B$.

  The main point is that among the four ordered pairs
  $(u,v)$, $(u,B-v)$, $(B-u,v)$, $(B-u,B-v)$
  it is easy to see exactly one ordered pair has odd sum less than $B$.
  (Indeed, exactly two pairs have odd sum, and the sum of all four numbers is $2B$.)

  So WLOG assume (by renaming $u$ to $B-u$ or $v$ to $B-v$ if needed)
  that $u+v < B$ is odd.
  In that case, $r = \max(u,v)$ and $s = \min(u,v)$
  is the unique pair in $\mathcal{R}$ with image $\{[u], [v]\}$.
  Hence \eqref{eq:Phi2} is also a bijection.

  \Cref{lem:bijection} now follows by composing \eqref{eq:Phi1} and \eqref{eq:Phi2}.
\end{proof}

\begin{example}[The bijection in base $11$]\label{ex:bijection}
  In base $11$, consider the two classes $[2]=[9]$ and $[5]=[6]$.
  The unique $(r,s)$ pair is $r=5$ and $s=2$, which gives
  $d_1 = 7$ and $d_2 = 3$.
  Hence the bijection in this case is
  \[ \Phi(7,3) = \{[5], [2]\}. \]
\end{example}

The next proposition is the heart of the paper.
It says that, after the pre-periodic segment,
the digit dynamics are exactly multiplication by $2$ on the two projective coordinates.

\begin{proposition}[Projective doubling]\label{prop:conjugacy}
  For every $(d_1,d_2)\in \TB$,
  \begin{equation}\label{eq:conjugacy}
        \Phi\bigl(K_B(d_1,d_2)\bigr)
        =
        \left\{\left[2\cdot\frac{d_1+d_2}{2}\right],
        \left[2\cdot\frac{d_1-d_2}{2}\right]\right\}.
  \end{equation}
  Equivalently, under $\Phi$, the Kaprekar map on $\TB$ is conjugate to
  \[
        \{[r],[s]\}\longmapsto\{[2r],[2s]\}
  \]
  on unordered projective pairs.
\end{proposition}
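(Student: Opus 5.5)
The plan is to use the explicit description of $K_B$ on $\TB$ from \Cref{tab:ACDE} and check each row against doubling. If $(d_1,d_2)\in\TB$, then $d_1\equiv d_2 \pmod 2$ and $d_1>d_2>0$. By \Cref{case:d1_gt_d2_gt_0}, $K_B(d_1,d_2)$ is given by one of the five rows of \Cref{tab:ACDE}, and in every case the image lies in $\TB$. So it suffices to compute $\Phi$ of each of the five possible images, i.e.\ the half-sum and half-difference of the new pair, and compare with $\{[d_1+d_2],[d_1-d_2]\}$. Note that $2\cdot\frac{d_1\pm d_2}{2}=d_1\pm d_2$.

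The row-by-row computation, writing $(e_1,e_2)$ for the image, is as follows.
\begin{itemize}
  \item Row 1, $(e_1,e_2)=(d_1-d_2+1,\,d_1-d_2-1)$: the half-sum is $d_1-d_2$ and the half-difference is $1$. We need $\{[d_1-d_2],[1]\}=\{[d_1-d_2],[d_1+d_2]\}$, i.e.\ $[d_1+d_2]=[1]$. This requires showing that the ordering $A\ge D\ge E\ge C$ forces $d_1+d_2=B\pm1$.
  \item Row 2, $(B-2d_2,\,B-2d_1)$: half-sum $B-d_1-d_2\equiv-(d_1+d_2)$, half-difference $d_1-d_2$.
  \item Row 3, $(2d_1-B,\,B-2d_2)$: half-sum $d_1-d_2$, half-difference $d_1+d_2-B$.
  \item Row 4, $(B-2d_2,\,2d_1-B)$: half-sum $d_1-d_2$, half-difference $B-d_1-d_2$.
  \item Row 5, $(2d_1-B,\,2d_2-B)$: half-sum $d_1+d_2-B$, half-difference $d_1-d_2$.
\end{itemize}
In rows 2 through 5, every residue is $\pm(d_1+d_2)$ or $\pm(d_1-d_2)$ modulo $B$, so the projective classes match immediately. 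The unordered-pair formulation absorbs which of the two classes comes from the half-sum.

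The main obstacle is row 1. Its conditions are $A\ge D$, i.e.\ $d_1+d_2\ge B-1$, and $E\ge C$, i.e.\ $d_1+d_2\le B+1$. Since $d_1+d_2$ is even and $B$ is odd, this forces $d_1+d_2=B\pm1$, so $[d_1+d_2]=[\pm1]=[1]$ as required.

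Finally, I will remark that the ``equivalently'' clause follows by combining \eqref{eq:conjugacy} with \Cref{lem:bijection}. Since $\Phi$ is a bijection and $\Phi(d_1,d_2)=\{[r],[s]\}$, identity \eqref{eq:conjugacy} says exactly $\Phi\circ K_B=D\circ\Phi$, where $D\{[r],[s]\}=\{[2r],[2s]\}$. The map $D$ is well defined on $\binom{\PB}{2}$ because $2$ is invertible modulo the odd $B$, so $[2r]\ne[2s]$ and both classes are nonzero.
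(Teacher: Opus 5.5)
Your proposal is correct and follows the paper's proof essentially verbatim. The paper also rewrites the five rows of \Cref{tab:ACDE} and finds the same half-sums and half-differences you list; it works in $(r,s)$-coordinates, where $2r=d_1+d_2$ and $2s=d_1-d_2$. It disposes of the first row by the same observation that $A\ge D$ and $E\ge C$ force $d_1+d_2=B\pm1$. You also add two small points the paper leaves implicit: the parity argument excluding $d_1+d_2=B$, and the check that doubling is well defined on $\binom{\PB}{2}$.
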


\begin{proof}
  We reuse the earlier \Cref{tab:ACDE} from the proof of \Cref{case:d1_gt_d2_gt_0}
  (which is applicable because we are in $\TB$),
  but rewrite it in $(r,s)$-coordinates.
  That is, we calculate $K_B(d_1, d_2) = K_B(r+s, r-s)$ in terms of $r$ and $s$,
  then compute the half-sum and half-difference of the new pair, modulo $B$.
  This transforms \Cref{tab:ACDE} into \Cref{tab:doubling} below.

  \begin{table}[ht]
    \[
      \begin{array}{cccc}
        \hline
        \text{Ordering} & K_B(r+s,r-s)
        & \text{Half-sum mod $B$} & \text{Half-diff mod $B$} \\
        \hline
        A \ge D \ge E \ge C & (2s+1,\ 2s-1) & 2s & 1 \\
        D \ge E > A > C & (B-2r+2s,\ B-2r-2s) & -2r & 2s \\
        A \ge D > C \ge E & (2r+2s-B,\ B-2r+2s) & 2s & 2r \\
        D \ge A > E \ge C & (B-2r+2s,\ 2r+2s-B) & 2s & -2r \\
        A > C > D \ge E & (2r+2s-B,\ 2r-2s-B) & 2r & 2s \\
        \hline
      \end{array}
    \]
    \caption{Computing $K_B(r+s, r-s)$ in terms of $r$ and $s$ instead.}
    \label{tab:doubling}
  \end{table}

  In \Cref{tab:doubling}, we find that in every row except the first,
  the two projective classes are exactly $[2r]$ and $[2s]$, in some order.
  This means that \eqref{eq:conjugacy} is proved
  in every case except $A \ge D \ge E \ge C$.

  In the problematic case, we then note that $A \ge D$ and $E \ge C$ rearrange to
  \begin{align*}
    A \ge D &\iff d_1 \ge B - d_2 - 1 \iff 2r \ge B-1 \\
    E \ge C &\iff B - d_1 \ge d_2 - 1 \iff 2r \le B+1.
  \end{align*}
  So in fact this case can only occur when $2r = B-1$ or $2r=B+1$.
  In either case we have $[2r] = [1]$ in $\PB$ and \eqref{eq:conjugacy} is still true.
\end{proof}

\begin{remark}
  One may also simplify the casework by noting that every pair in $\TB$
  lies in the case treated by
  \cite[equation (1.1)]{HassePrichett} and \cite[\S2, ``type (a)'']{DevlinZeng},
  where it is shown that $K_B(d_1, d_2)$ is $\{|2d_1-B|, |2d_2-B|\}$ in some order.
  (In $\TB$, $d_1+d_2$ is even and hence never equals $B$.)
  This makes \Cref{prop:conjugacy} follow more directly,
  eliminating the need to compute \Cref{tab:doubling}.
\end{remark}

\begin{example}[The conjugacy in base $11$]\label{ex:conjugacy}
  Start from $(d_1,d_2)=(5,1)\in \mathcal{T}_{11}$.  Then
  \[
        r=\frac{5+1}{2}=3
        \qquad\text{and}\qquad
        s=\frac{5-1}{2}=2,
  \]
  so $\Phi(5,1)=\{[3],[2]\}$.  Doubling gives
  \[
        \{[3],[2]\}\longmapsto \{[6],[4]\}=\{[5],[4]\}
  \]
  in $P_{11}$, since $[6]=[-5]=[5]$.  The Kaprekar step gives
  \[
        (5,1)\longmapsto(9,1).
  \]
  For this new state,
  \[
        \Phi(9,1)=
        \left\{\left[\frac{10}{2}\right],
        \left[\frac{8}{2}\right]\right\}
        =\{[5],[4]\},
  \]
  exactly as Proposition~\ref{prop:conjugacy} predicts.
\end{example}

At this point \Cref{thm:structural} is immediate.
\begin{proof}[Proof of \Cref{thm:structural}]
  By \Cref{lem:transient}, every orbit reaches the forward-invariant region $\TB$
  after at most three steps in difference coordinates.
  By \Cref{lem:bijection},
  $\Phi$ is a bijection from this region to the unordered two-element subsets of $\PB$.
  By \Cref{prop:conjugacy}, this bijection carries one Kaprekar step to one doubling step.
  This proves both the pre-periodic statement and the asserted conjugacy on the stable region.
\end{proof}

\section{Cycle lengths}
\label{sec:cycle}
We have now replaced the Kaprekar map by projective doubling.  The rest of the argument is arithmetic.
The key quantity is the order of $2$ after signs are ignored
(see Yamagami-Matsui \cite{YamagamiMatsuiLoops};
denoted $\sigma(r)$ in Kay-Downes-Ward \cite{KayDownesWardOdd},
and \href{https://oeis.org/A003558}{OEIS A003558}).

\begin{definition}[Projective order of $2$]
  For odd $B>3$, define
  \begin{equation}\tag{L}\label{eq:lambda}
        \lambda(B)=\min\{m>0:2^m\equiv \pm1\pmod B\}.
  \end{equation}
  Equivalently, $\lambda(B)$ is the order of the class of $2$ in the finite group
  \[
        (\ZZ/B\ZZ)^\times/\{\pm1\}.
  \]
\end{definition}

This definition is tailored to $\PB$:
multiplying by $2^{\lambda(B)}$ brings every projective class back to itself.
Consequently the induced map on unordered pairs cannot have longer cycles.

\begin{lemma}\label{lem:orderbound}
  Every cycle of the doubling map on $\binom{\PB}{2}$ has length at most $\lambda(B)$.
\end{lemma}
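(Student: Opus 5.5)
We will show that the $\lambda(B)$-th iterate of the doubling map $D\colon \{[r],[s]\}\mapsto\{[2r],[2s]\}$ is the identity on $\binom{\PB}{2}$. Every cycle length then divides $\lambda(B)$, and in particular is at most $\lambda(B)$.

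First we check that $D$ is well defined as a self-map of $\binom{\PB}{2}$. Since $B$ is odd, $2$ is a unit modulo $B$. Hence multiplication by $2$ sends nonzero residues to nonzero residues and commutes with negation, so it descends to a bijection $[x]\mapsto[2x]$ of $\PB$. A bijection sends distinct classes $[r]\neq[s]$ to distinct classes $[2r]\neq[2s]$, so $D$ maps two-element subsets to two-element subsets.

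Next, iterating gives
\[ D^m(\{[r],[s]\})=\{[2^m r],[2^m s]\}. \]
By the definition \eqref{eq:lambda}, we have $2^{\lambda(B)}\equiv\varepsilon\pmod B$ with $\varepsilon\in\{\pm1\}$. Therefore $[2^{\lambda(B)}x]=[\varepsilon x]=[x]$ for every nonzero $x$, and $D^{\lambda(B)}$ fixes every unordered pair.

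Finally, take a point $P$ on a cycle of $D$, and let $\ell$ be the cycle length, i.e.\ the least $\ell>0$ with $D^\ell(P)=P$. Since $D^{\lambda(B)}(P)=P$, the standard orbit argument (divide $\lambda(B)$ by $\ell$ with remainder) gives $\ell\mid\lambda(B)$, so $\ell\le\lambda(B)$.

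There is no real obstacle here. The only points needing care are the well-definedness of $D$, which uses that $B$ is odd, and the observation that the sign ambiguity $\pm1$ in \eqref{eq:lambda} is absorbed exactly by the projective classes.
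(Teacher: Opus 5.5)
Your proof is correct and is the paper's argument: $2^{\lambda(B)}\equiv\pm1\pmod B$ makes the $\lambda(B)$-th iterate of doubling fix every class in $\PB$, hence every unordered pair in $\binom{\PB}{2}$. Your additional remarks, that doubling is well defined on $\binom{\PB}{2}$ because $B$ is odd and that each cycle length in fact divides $\lambda(B)$, are valid refinements of that same idea.
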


\begin{proof}
  By definition, $2^{\lambda(B)}\equiv\pm1\pmod B$.
  Therefore multiplication by $2^{\lambda(B)}$ fixes every projective class $[x]\in \PB$,
  including classes represented by non-units.
  Hence the induced action on unordered two-element subsets of $\PB$ is also fixed
  by the $\lambda(B)$-th iterate.  Every orbit length is therefore at most $\lambda(B)$.
\end{proof}

\begin{lemma}[{\cite[Theorem 2]{KayDownesWardOdd}}]
  \label{lem:lambdabound}
  One has
  \[
        \lambda(B)\le \frac{\varphi(B)}{2}.
  \]
\end{lemma}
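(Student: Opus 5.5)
The plan is to use the group-theoretic description of $\lambda(B)$ given right after \eqref{eq:lambda}: $\lambda(B)$ is the order of the class of $2$ in the quotient group
\[ G \coloneq (\ZZ/B\ZZ)^\times/\{\pm1\}. \]
Since $B$ is odd, $2$ is a unit modulo $B$, so its class lies in $G$. By Lagrange's theorem the order of any element of a finite group divides the order of the group. Hence $\lambda(B)$ divides $|G|$, and in particular $\lambda(B)\le |G|$.

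It then remains only to compute $|G|$. First, $|(\ZZ/B\ZZ)^\times|=\varphi(B)$. Second, the subgroup $\{\pm1\}$ has exactly two elements, because $B>3$ (indeed $B>2$ suffices) gives $1\not\equiv -1 \pmod B$. Therefore $|G|=\varphi(B)/2$, and so
\[ \lambda(B)\le \frac{\varphi(B)}{2}. \]
One should also check that $\{\pm1\}$ is really a subgroup of $(\ZZ/B\ZZ)^\times$; this is immediate.

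There is no substantial obstacle. The only point needing care is the equivalence between the minimal-$m$ definition in \eqref{eq:lambda} and the order in $G$. The class of $2^m$ is trivial in $G$ precisely when $2^m\equiv\pm1\pmod B$, so the least such positive $m$ is, by definition, the order of $[2]$ in $G$. In particular this minimum exists, since $[2]$ has finite order in the finite group $G$.
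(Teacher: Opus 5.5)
Your proposal is correct and follows the paper's proof essentially verbatim: both identify $\lambda(B)$ as the order of the class of $2$ in $(\ZZ/B\ZZ)^\times/\{\pm1\}$, note that this group has order $\varphi(B)/2$, and apply Lagrange's theorem. Your extra checks are harmless and make explicit what the paper leaves implicit. These are that $1\not\equiv -1 \pmod B$, and that the minimal-$m$ definition agrees with the group order.
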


\begin{proof}
  The class of $2$ lies in the finite group $(\ZZ/B\ZZ)^\times/\{\pm1\}$.
  This group has order $\varphi(B)/2$, because $B$ is odd and $B>1$.
The order of an element in a finite group divides the order of the group.  Hence $\lambda(B)\le \varphi(B)/2$.
\end{proof}

This lets us prove part of Corollary~\ref{cor:length} now.
\begin{corollary}
  \label{cor:primeonly}
  If $B > 3$ is odd, then $c_{\max}(B) \le \frac{B-1}{2}$.
  Moreover, if equality holds then $B$ must be prime.
\end{corollary}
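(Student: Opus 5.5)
We chain the earlier results. By \Cref{thm:structural}, every orbit of $K_B$ enters $\TB$ after at most three steps, and $\TB$ is forward invariant. So every terminal cycle of $K_B$ on $X_B$ lies inside $\TB$; it cannot contain a point outside $\TB$, since applying $K_B^3$ to such a point lands in $\TB$ and then never leaves. On $\TB$, the map $K_B$ is conjugate via $\Phi$ to projective doubling on $\binom{\PB}{2}$ (\Cref{lem:bijection} and \Cref{prop:conjugacy}). Conjugacy preserves cycle lengths, so $c_{\max}(B)$ equals the longest cycle length of doubling on $\binom{\PB}{2}$. As recorded in Section~2, difference coordinates do not change terminal cycle lengths.

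For the bound, \Cref{lem:orderbound} gives $c_{\max}(B)\le \lambda(B)$, and \Cref{lem:lambdabound} gives $\lambda(B)\le \varphi(B)/2$. Since $\varphi(B)\le B-1$ for every $B>1$, we get $c_{\max}(B)\le \varphi(B)/2\le (B-1)/2$.

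For the equality statement, suppose $c_{\max}(B)=(B-1)/2$. Then every inequality in the chain is an equality, and in particular $\varphi(B)=B-1$. This forces $B$ to be prime, because for composite $B>1$ the integers $1$ and any proper divisor $d$ with $1<d<B$ are both in $\{1,\dots,B\}$, but $d$ is not coprime to $B$, so $\varphi(B)\le B-2$. (Note that $B$ itself is never counted for $B>1$.)

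There is no real obstacle here. The only step needing care is the first: justifying that terminal cycles live entirely in $\TB$. This follows from forward invariance together with $K_B^3(X_B)\subseteq\TB$. A point $x$ on a cycle equals $K_B^{3k}(x)$ for a suitable $k$ (take $k$ to be the cycle length), and $K_B^{3k}(x)$ lies in $\TB$.
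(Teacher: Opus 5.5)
Your proposal is correct and follows the paper's proof: reduce to doubling on $\binom{\PB}{2}$ via \Cref{thm:structural}, chain $c_{\max}(B)\le\lambda(B)\le\varphi(B)/2\le(B-1)/2$ using \Cref{lem:orderbound,lem:lambdabound}, and read off $\varphi(B)=B-1$, hence $B$ prime, in the equality case. Your two additions fill in details the paper leaves implicit: that a periodic point $x=K_B^{3k}(x)$ must lie in $\TB$, and that a proper divisor $d$ of a composite $B$ gives $\varphi(B)\le B-2$.
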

\begin{proof}
  By Theorem~\ref{thm:structural},
  every terminal Kaprekar cycle is represented by a cycle of the doubling map on $\binom{\PB}{2}$.
  Lemmas~\ref{lem:orderbound} and \ref{lem:lambdabound} give
  \[
        c_{\max}(B)
        \le \lambda(B)
        \le \frac{\varphi(B)}{2}
        \le \frac{B-1}{2}.
  \]
  This proves the stated bound.

  As for the equality case, suppose that $c_{\max}(B)=(B-1)/2$.  Then every inequality in
  \[
        c_{\max}(B)
        \le \lambda(B)
        \le \frac{\varphi(B)}{2}
        \le \frac{B-1}{2}
  \]
  must be an equality.  In particular, $\varphi(B)=B-1$.
  For an integer $B>1$, this is equivalent to $B$ being prime.
  Hence equality forces prime base.
\end{proof}

This does not say that every prime base is extremal.  It only says that composite bases are ruled out.
The examples below show both a composite base where the bound is strict and a
prime base where the bound is sharp.

\begin{example}[The order bound in bases $9$ and $11$]\label{ex:orderbound}
  For $B=9$, the projective order of $2$ is $3$, since $2^3\equiv -1\pmod 9$.
  Lemma~\ref{lem:orderbound} therefore forces every terminal four-digit cycle in
  base $9$ to have length at most $3$,
  and the examples in the introduction show that this bound is sharp in base $9$.

  For $B=11$, the projective order is $5$,
  since $2^5\equiv -1\pmod {11}$ and no smaller positive exponent gives $\pm1$.
  Thus the order bound permits cycles of length $5$,
  and the two length-five cycles displayed in the introduction attain it.
\end{example}

The same projective model also gives an explicit count of all terminal cycles.
We record this as a proposition because it is the most useful practical form of the classification.
For each odd divisor $n>1$ of $B$, put
\[
        \lambda(n)=\min\{m>0:2^m\equiv\pm1\pmod n\}.
\]
Following \cite[equation (5)]{KayDownesWardOdd}, for a positive integer $a$, define
\begin{equation}\label{eq:mu}
        \mu_B(a)=
        \sum_{\substack{n\mid B,\ n>1\\ \lambda(n)=a}}
        \frac{\varphi(n)}{2a}.
\end{equation}
Thus $\mu_B(a)$ is the number of $a$-cycles of the doubling map on $\PB$ itself,
before passing to unordered pairs.

\begin{theorem}[Explicit cycle counts]
  Let $N_B(L)$ be the number of terminal four-digit Kaprekar cycles of length $L$ in the odd base $B>3$,
  with constant digit strings omitted.  Then
  \begin{equation}\label{eq:cycle-count}
    \begin{split}
      N_B(L)
      ={}&
      \mu_B(L)\left\lfloor\frac{L-1}{2}\right\rfloor
      +\mu_B(2L)
      +\binom{\mu_B(L)}{2}L \\
      &\quad
      +\sum_{\substack{a<b\\ \operatorname{lcm}(a,b)=L}}
      \mu_B(a)\mu_B(b)\gcd(a,b),
    \end{split}
  \end{equation}
  where $\mu_B(a)$ is given by \eqref{eq:mu},
  and where $\mu_B(a)=0$ if no divisor $n$ contributes to \eqref{eq:mu}.
\end{theorem}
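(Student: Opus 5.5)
By \Cref{thm:structural} and the remarks in \Cref{sec:transient}, terminal four-digit cycles of length $L$ correspond bijectively to $L$-cycles of $K_B$ on $X_B$. Every periodic point lies in $K_B^3(X_B)\subseteq\TB$, so these are exactly the $L$-cycles of the doubling map $\delta\colon\{[x],[y]\}\mapsto\{[2x],[2y]\}$ on $\binom{\PB}{2}$. We therefore count $L$-cycles of $\delta$. First we describe the cycle structure of $x\mapsto 2x$ on $\PB$ itself. Partition the nonzero residues by $n=B/\gcd(x,B)$, where $n$ ranges over divisors of $B$ with $n>1$. The classes with a given $n$ are $\{[ (B/n)u] : u\in(\ZZ/n\ZZ)^\times\}$, in bijection with $(\ZZ/n\ZZ)^\times/\{\pm1\}$. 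Doubling acts there as multiplication by the class of $2$, whose order is $\lambda(n)$. All orbits in this piece are cosets of $\langle 2\rangle$, hence have length exactly $\lambda(n)$, and there are $\varphi(n)/(2\lambda(n))$ of them. Summing over $n$ shows that $\mu_B(a)$ is the number of $a$-cycles on $\PB$, as asserted after \eqref{eq:mu}.

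Next we classify $\delta$-cycles on unordered pairs $\{P,Q\}$ by the $\PB$-cycles $O_P,O_Q$ containing $P$ and $Q$.

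\emph{Case 1: $O_P=O_Q$, of length $a$.} Identify this orbit with $\ZZ/a\ZZ$ under the shift, and write $Q=2^kP$ with $k\not\equiv0\pmod a$. Unordered pairs in one orbit correspond to differences $\pm k$. The $\binom a2$ pairs split as follows.
\begin{itemize}
\item If $a$ is even and $k=a/2$, the pair is swapped after $a/2$ steps. This gives one cycle of length $a/2$ with $a/2$ elements.
\item The remaining $\lfloor (a-1)/2\rfloor$ difference classes each give one cycle of length $a$.
\end{itemize}
So an $L$-cycle arises either from an $L$-orbit, in $\lfloor(L-1)/2\rfloor$ ways, or from a $2L$-orbit, in one way. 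This produces the terms $\mu_B(L)\lfloor\frac{L-1}{2}\rfloor+\mu_B(2L)$.

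\emph{Case 2: distinct orbits of lengths $a$ and $b$.} The product $O_P\times O_Q$, which has $ab$ elements, is acted on by $\ZZ$ with period $\operatorname{lcm}(a,b)$. Since $O_P\neq O_Q$, unordered pairs here correspond exactly to ordered pairs, so we get $ab/\operatorname{lcm}(a,b)=\gcd(a,b)$ cycles, each of length $\operatorname{lcm}(a,b)$.
\begin{itemize}
\item If $a=b=L$, choosing two distinct $L$-orbits gives $\binom{\mu_B(L)}{2}\cdot L$.
\item If $a<b$ with $\operatorname{lcm}(a,b)=L$, we get $\mu_B(a)\mu_B(b)\gcd(a,b)$.
\end{itemize}
There is no double counting here, because the unordered pair of orbits determines the term.

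Summing the two cases gives \eqref{eq:cycle-count}. The main point needing care is Case 1: we must verify that the period of difference $k$ is $a$ unless $2k\equiv0\pmod a$. The pair $\{0,k\}$ returns after $t$ steps iff $\{t,t+k\}=\{0,k\}$ mod $a$. That happens either for $t\equiv0$, or for $t\equiv k$ together with $t+k\equiv0$, which forces $2k\equiv0$. Counting the classes with $k\neq0$ under $k\sim -k$ then gives $\lfloor(a-1)/2\rfloor$ generic classes, plus one half-length class when $a$ is even.

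We must also check that the coset description of orbits includes non-unit classes. This is handled by the decomposition by $\gcd(x,B)$, since $\lambda(n)$ is defined for every divisor $n>1$.
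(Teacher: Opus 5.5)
Your proposal is correct and takes essentially the same approach as the paper. Both arguments stratify $\PB$ by $\gcd(x,B)$ into copies of $(\ZZ/n\ZZ)^\times/\{\pm1\}$ on which doubling has cycles of length $\lambda(n)$. Both then split unordered pairs into the same-cycle case, classified by cyclic distance, and the distinct-cycle case, which contributes $\gcd(a,b)$ orbits of length $\operatorname{lcm}(a,b)$; you also make explicit the period check and the orbit count on $O_P\times O_Q$ that the paper asserts without proof.
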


\begin{proof}
  We first compute the cycle structure of doubling on $\PB$;
  this goes along the lines of Kay-Downes-Ward \cite[\S2.2]{KayDownesWardEven}.
  Fix a nonzero residue class $[x]\in \PB$, and let $g=\gcd(x,B)$.  Write $B=gn$.
  Then $x=g y$ with $y$ a unit modulo $n$.
  Multiplication by $2$ preserves $g$,
  and the projective classes with this fixed value of $g$ are naturally identified with
  \[
        (\ZZ/n\ZZ)^\times/\{\pm1\}.
  \]
  On this set, multiplication by $2$ has order $\lambda(n)$.
  Hence all cycles in this stratum have length $\lambda(n)$, and the number of such cycles is
  \[
        \frac{|(\ZZ/n\ZZ)^\times/\{\pm1\}|}{\lambda(n)}
        =\frac{\varphi(n)}{2\lambda(n)}.
  \]
  Summing over all divisors $n>1$ of $B$ gives \eqref{eq:mu}.

  It remains to pass from cycles on $\PB$ to cycles on unordered two-element subsets of $\PB$.
  This is elementary cycle combinatorics.
  Suppose first that the two elements are chosen from two distinct cycles of lengths $a$ and $b$.
  If $a<b$, then each choice of the two cycles contributes $\gcd(a,b)$ orbits of unordered pairs,
  each of length $\operatorname{lcm}(a,b)$.  This gives the final sum in \eqref{eq:cycle-count}.
  If the two distinct cycles have the same length $L$,
  then each unordered pair of such cycles contributes $L$ orbits of length $L$,
  giving the term $\binom{\mu_B(L)}{2}L$.

  Finally suppose that both elements lie in the same $a$-cycle.  Label that cycle cyclically.
  An unordered pair is determined, up to rotation, by its cyclic distance
  \[
        1,2,\dots,\left\lfloor\frac a2\right\rfloor.
  \]
  For each distance smaller than $a/2$, the orbit has length $a$.  This contributes
  \[
        \mu_B(a)\left\lfloor\frac{a-1}{2}\right\rfloor
  \]
  orbits of length $a$.  If $a$ is even,
  the antipodal distance $a/2$ gives one additional orbit of length $a/2$ for each $a$-cycle;
  this accounts for the term $\mu_B(2L)$ in the count of length-$L$ orbits.

  By Theorem~\ref{thm:structural},
  terminal Kaprekar cycles are exactly these cycles of the doubling action on $\binom{\PB}{2}$.
  This proves the formula.
\end{proof}

\begin{corollary} The following are true.

\noindent
  (i) For odd $B>17$, there are at least two terminating $\lambda(B)$-cycles.

\smallskip
\noindent
  (ii) There is a unique terminating cycle for odd $B$ if and only if
  $B=3$, $B=5$ or $B=7$.
  (In other words, these are the only odd bases where
  all the non-constant strings have the same eventual behaviour.)
\end{corollary}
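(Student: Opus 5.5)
The plan is to derive both parts from the explicit cycle-count formula \eqref{eq:cycle-count}, together with one elementary observation. The stratum $n=B$ in \eqref{eq:mu} always contributes $\varphi(B)/(2\lambda(B))\ge 1$ to $\mu_B(\lambda(B))$. This is a positive integer, since it counts the cycles of doubling on $(\ZZ/B\ZZ)^\times/\{\pm1\}$. Hence $\mu_B(\lambda(B))\ge1$. Every term in \eqref{eq:cycle-count} is nonnegative, so for $L=\lambda(B)$ we get
\[
N_B(\lambda(B))\ \ge\ \mu_B(\lambda(B))\left\lfloor\frac{\lambda(B)-1}{2}\right\rfloor\ \ge\ \left\lfloor\frac{\lambda(B)-1}{2}\right\rfloor .
\]

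For (i), it therefore suffices to show that $\lambda(B)\ge5$ whenever $B>17$ is odd. Suppose instead that $\lambda(B)=m\le4$. Then $B$ divides $2^m-1$ or $2^m+1$ for some $m\le 4$. These numbers are $1,3,3,5,7,9,15,17$, so $B\le17$, a contradiction. Thus $\lambda(B)\ge5$, which gives $\lfloor(\lambda(B)-1)/2\rfloor\ge2$. Hence there are at least two terminal $\lambda(B)$-cycles.

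For (ii), the total number of terminal cycles is $\sum_L N_B(L)$. By (i) this is at least $2$ once $B>17$. The remaining odd bases are handled one at a time.
\begin{itemize}
\item $B=3$: \Cref{thm:structural} does not apply. Here $X_3=\{(1,0),(1,1),(2,0),(2,1),(2,2)\}$ is small enough to iterate by hand using \eqref{eq:d2positive} and \eqref{eq:d2zero}, and I expect to find a single terminal cycle.
\item $B=5$: the only divisor is $n=5$, with $\lambda(5)=2$ and $\varphi(5)/4=1$. So $\mu_5(2)=1$, and every other $\mu_5(a)$ vanishes. The formula gives $N_5(1)=\mu_5(2)=1$ and $N_5(2)=\mu_5(2)\lfloor 1/2\rfloor=0$, so there is exactly one cycle. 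This is consistent with the remark that $c_{\max}(5)=1$.
\item $B=7$: here $\lambda(7)=3$ and $\varphi(7)/6=1$. So $\mu_7(3)=1$, which gives $N_7(3)=1$, and all other counts vanish.
\item $B=9,11,13,15,17$: it is enough to exhibit two cycles in each case.
\begin{itemize}
\item For $B=9$ and $B=11$, the introduction already displays two cycles.
\item For $B=13$, $\lambda=6$ and $\mu_{13}(6)=1$, so $N_{13}(6)\ge\lfloor5/2\rfloor=2$.
\item For $B=17$, $\lambda=4$ and $\mu_{17}(4)=2$, so $N_{17}(4)\ge 2\lfloor3/2\rfloor=2$.
\item For $B=15$, the divisors $3,5,15$ give $\mu_{15}(1)=1$, $\mu_{15}(2)=1$ and $\mu_{15}(4)=1$. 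Then $N_{15}(1)\ge\mu_{15}(2)=1$, while $N_{15}(4)\ge\mu_{15}(4)\lfloor 3/2\rfloor=1$. These are already two distinct cycles.
\end{itemize}
\end{itemize}

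The main obstacle is bookkeeping rather than ideas. One must make sure that a lower bound uses only nonnegative terms of \eqref{eq:cycle-count}. One must also treat $B=3$ separately, since $\mathcal{T}_3=\emptyset$ and the formula is not available there. That case is settled by direct iteration of $K_3$ on five states.
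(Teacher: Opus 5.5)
Your proposal is correct. Part (i) is the paper's argument, and you make explicit two points the paper leaves implicit: $\lambda(B)\ge5$ for $B>17$ because $B$ cannot divide any of $2^m\pm1$ with $m\le4$, and $\mu_B(\lambda(B))\ge1$ because of the stratum $n=B$.

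For part (ii) your route differs from the paper's. You use (i) to dispose of $B>17$, then evaluate $\mu_B$ and \eqref{eq:cycle-count} base by base for $5\le B\le17$. The paper gives one uniform pigeonhole argument instead. With $N=|\PB|=(B-1)/2$, every doubling cycle on $\binom{\PB}{2}$ has length at most $\lambda(B)\le\varphi(B)/2\le N$. But $\binom{\PB}{2}$ has $N(N-1)/2>N$ elements once $N\ge4$, i.e.\ $B\ge9$. Since doubling is a permutation, there must then be at least two orbits.

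The paper's argument is independent of (i). It needs only \Cref{lem:orderbound,lem:lambdabound}, not the full count \eqref{eq:cycle-count}, whose proof in the paper is only sketched. It also avoids computing $\lambda(n)$ for the divisors of $9\le B\le17$. Your route costs more bookkeeping but yields actual cycle counts rather than just the existence of two cycles. Your values $\lambda(13)=6$, $\mu_{17}(4)=2$ and $\mu_{15}(1)=\mu_{15}(2)=\mu_{15}(4)=1$ all check out.

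For $B=5,7$, evaluating the formula is equivalent to the paper's direct observation: $\binom{P_5}{2}$ is a single point, and $\binom{P_7}{2}$ is one $3$-cycle.

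For $B=3$, you should actually carry out the five-state check rather than saying you expect the answer. From \eqref{eq:d2positive} and \eqref{eq:d2zero} one gets $(1,0)\mapsto(2,0)$, $(2,1)\mapsto(2,0)$, $(2,2)\mapsto(2,0)$, and $(2,0)\mapsto(1,1)\mapsto(2,0)$. So there is a single terminal cycle, as claimed.
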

\begin{proof}
  Put $L=\lambda(B)$. Then \eqref{eq:mu} implies that $\mu_B(L)\geq 1$.
  If $B>17=2^4+1$, it is immediate from the definition of $\lambda$ that $L>4$,
  and thus the calculation of the first term of the right side \eqref{eq:cycle-count}
  implies the claim that $N_B(L)>1$, as all the other terms are non-negative.
  This implies (i).

  For (ii), put $N=\left|\PB\right|=(B-1)/2$, so that $\binom{\PB}{2}$ has $N(N-1)/2$ elements,
  while by Lemmas~\ref{lem:orderbound} and \ref{lem:lambdabound} every cycle of the
  doubling map has length at most $\lambda(B)\le \varphi(B)/2\le N$.
  If $B\geq 9$ then $N\geq 4$, hence $N(N-1)/2>N$ and the doubling action has at least
  two orbits. By Theorem~\ref{thm:structural}, there are at least two terminating cycles.
  Conversely $\binom{P_5}{2}$ has a single element, and for $B=7$ doubling permutes the
  three elements of $\binom{P_7}{2}$ in one $3$-cycle.
  Finally $\mathcal{T}_3=\varnothing$, so $B=3$ lies outside Theorem~\ref{thm:structural};
  a direct check shows that all five states of $X_3$ enter the cycle
  $(2,0)\mapsto(1,1)\mapsto(2,0)$.
  For $B=5,7$ this cycle is moreover shown to be unanimous in \cite{KayDownesWardOdd};
  for $B=3$ unanimity follows from the complete classification of \cite{YamagamiMatsuiLoops}.
\end{proof}

\begin{remark}
  Here the base $5$ corresponds to the unique terminating cycle being a $1$-cycle
  (fixed point, i.e.\ the original Kaprekar phenomena),
  and that happens \cite{HassePrichett} for even $B$,
  exactly when $B=2^{2n+1} \cdot 5$ for $n\geq 0$.
  The numerical evidence suggests that the even bases $B$ for which
  there is a unique terminating cycle are
  exactly $2^{2n+1} \cdot 3$, $2^{2n+1} \cdot 5$,  $2^{3n\pm 1} \cdot 7$,
  the cycle lengths being $6(n+1)$, $1$, and $3$ respectively.
\end{remark}

\section{Prime bases and the number of largest cycles}
\label{sec:prime}

The preceding section gives the full cycle-count formula and proves the universal bound.
In this section we finish the proofs of \Cref{cor:length} and \Cref{cor:primecriterion}.

\subsection{Proof of \texorpdfstring{\Cref{cor:length}}{Corollary \ref{cor:length}}}
Recalling \Cref{cor:primeonly},
we suppose $B = p > 3$ is prime and set $q \coloneq (p-1)/2$.
The simplification is that every nonzero residue is a unit,
so $P_p$ is a group quotient rather than merely a set of projective residue classes.
Thus the projective set $P_p=\FF_p^\times/\{\pm1\}$ has exactly $q$ elements.
The doubling map on $P_p$ has order
\[
        \lambda(p)=\min\{m>0:2^m\equiv \pm1\pmod p\}.
\]

If $p>5$, then $q\ge3$.  In this case,
the induced action on unordered two-element subsets has a cycle of length $q$ if
and only if multiplication by $2$ is a single $q$-cycle on $P_p$.
Equivalently, this happens if and only if $\lambda(p)=q$.  Therefore, for primes $p>5$,
\[
        c_{\max}(p)=\frac{p-1}{2}
        \quad\Longleftrightarrow\quad
        \lambda(p)=\frac{p-1}{2}.
\]
This proves the equality criterion in Corollary~\ref{cor:length}.

The prime $p=5$ is exceptional for a simple reason.
Here $q=2$, so $P_5$ has only two elements and $\binom{P_5}{2}$ has only one element.
Hence the largest induced cycle has length $1$, even though $\lambda(5)=2$.

\subsection{Proof of \texorpdfstring{\Cref{cor:primecriterion}}{Corollary \ref{cor:primecriterion}}}
It remains only to count the largest cycles in the equality case.
This is a small piece of cycle combinatorics:
unordered pairs on a single cycle are classified by their cyclic distance.

Assume $p>5$ and $\lambda(p)=q$.  Then doubling is a single $q$-cycle on $P_p$.
The induced action on unordered two-element subsets of a $q$-cycle is classified by cyclic distance.
The possible distances are
\[
        1,2,\dots,\left\lfloor\frac q2\right\rfloor.
\]
For a fixed distance $j$, all unordered pairs at distance $j$ form one orbit under cyclic rotation.
If $q$ is odd, every such orbit has length $q$.
If $q$ is even, the distance $q/2$ orbit has length $q/2$, not $q$, because each pair is antipodal.
Therefore the number of orbits of length $q$ is
\[
        \begin{cases}
        (q-1)/2, & q\text{ odd},\\[4pt]
        q/2-1, & q\text{ even}.
        \end{cases}
\]
Both cases are equal to
\[
        \left\lfloor\frac{q-1}{2}\right\rfloor.
\]
Since $c_{\max}=q$, the number of largest terminal Kaprekar cycles is
\[
        \left\lfloor\frac{c_{\max}-1}{2}\right\rfloor.
\]
This completes the proof of Corollary~\ref{cor:primecriterion}.

\begin{example}[Counting largest cycles when $p=11$]\label{ex:counting}
  For $p=11$, the projective set $P_{11}$ has $q=5$ elements.
  Since $2$ has projective order $5$, doubling is a single five-cycle on $P_{11}$.
The unordered pairs of elements of a five-cycle have two possible cyclic distances, namely $1$ and $2$.
These two distances give the two length-five Kaprekar cycles displayed in the introduction.  This is the count
  \[
        \left\lfloor\frac{5-1}{2}\right\rfloor=2.
  \]
For $p=7$, the projective set has $q=3$ elements and only one cyclic distance, so there is one largest cycle,
  again as in the introduction.
\end{example}

\section{Appendix: AxiomProver's autonomous Lean verification}\label{sec:AI}
Here we provide the context for this project as well as the protocol used for Lean
formalization and verification (see \cite{Mathlib2020, Lean}).

\subsection*{Process}
The formal proofs provided in this work were developed and verified using Lean~4.28.0.
Compatibility with earlier or later versions is not guaranteed due to the
evolving nature of the Lean 4 compiler and its core libraries.
The relevant files are all posted in the following repository:
\begin{center}
  \url{https://github.com/AxiomMath/kaprekar4}
\end{center}
The input files were:
\begin{itemize}
  \item \texttt{kaprekar-input.tex}: statements
    of \Cref{thm:structural,cor:length,cor:primecriterion}
    (and the definitions necessary to state these main results)
  \item \texttt{task.md}: short instructions referring to the Kaprekar input \TeX
  \item \texttt{.environment}: specifies Lean version to use
\end{itemize}
We note that we did not provide the proofs of the theorems to AxiomProver, only the statements.
Based on these input files, AxiomProver generated two output files:
\begin{itemize}
  \item \texttt{problem.lean}, a Lean 4.28.0 formalization of the problem statement; and
  \item \texttt{solution.lean}, a complete Lean 4.28.0 formalization of the proof.
\end{itemize}

With the formal solutions for guidance, the human authors wrote this paper
(without the use of AI) for human readers.
At first glance, the proofs found by AxiomProver may not resemble the narrative presented in this paper.
Turning a Lean file into a human-readable proof is difficult
because Lean is written as code for a type-checker.

\section*{Declaration of generative AI and AI-assisted technologies in the manuscript preparation process}
As described in the preceding Appendix,
AxiomProver (an AI tool under development)
was used to produce a formal proof of \Cref{thm:structural,cor:length,cor:primecriterion}.
The paper was written without AI.


\begin{thebibliography}{99}

  \bibitem{DevlinZeng}
  P. Devlin and T. Zeng,
  \emph{Maximum distances in the four-digit Kaprekar process},
  Integers \textbf{21} (2021), Paper No. A97.

\bibitem{Gardner}
  M. Gardner,
  \emph{} Mathematical Games, pa.\ 112, March 1975,
\url{https://www.scientificamerican.com/article/mathematical-games-1975-03/}

  \bibitem{HassePrichett}
  H. Hasse and G. D. Prichett,
  \emph{The determination of all four-digit Kaprekar constants},
  Journal f\"ur die reine und angewandte Mathematik \textbf{299--300} (1978), 113--124.

  \bibitem{Jordan}
  J. H. Jordan,
  \emph{Self-producing sequences of digits},
  American Mathematical Monthly \textbf{71} (1964), 61--64.

  \bibitem{Kaprekar1949}
  D. R. Kaprekar,
  \emph{Another solitaire game},
  Scripta Mathematica \textbf{15} (1949), 244--245.

  \bibitem{Kaprekar1955}
  D. R. Kaprekar,
  \emph{An interesting property of the number 6174},
  Scripta Mathematica \textbf{21} (1955), 304.

  \bibitem{KayDownesWardOdd}
  A. Kay and K. Downes-Ward,
  \emph{Fixed points and cycles of the Kaprekar transformation: 1. Odd bases},
  Journal of Integer Sequences \textbf{25} (2022), Article 22.6.7.

  \bibitem{KayDownesWardEven}
  A. Kay and K. Downes-Ward,
  \emph{Fixed points and cycles of the Kaprekar transformation: 2. Even bases},
  arXiv:2408.12257.

  \bibitem{Mathlib2020}
  The mathlib Community,
  The {L}ean mathematical library,
  in \emph{Proceedings of the 9th ACM SIGPLAN International Conference on
  Certified Programs and Proofs (CPP 2020)},
  ACM, 2020.

  \bibitem{Lean}
  L.~de~Moura, S.~Kong, J.~Avigad, F.~van~Doorn, and J.~von~Raumer,
  The {L}ean theorem prover (system description),
  in \emph{Automated Deduction -- CADE-25},
  Lecture Notes in Computer Science~9195, Springer, 2015, 378--388.

  \bibitem{Prichett}
  G. D. Prichett,
  \emph{Terminating cycles for iterated difference values of five digit integers},
  Journal f\"ur die reine und angewandte Mathematik \textbf{303--304} (1978), 379--388.

  \bibitem{Thakur}
  D. S. Thakur,
  \emph{Kaprekar phenomena},
  Proceedings of Ropar Conference, RMS Lecture Notes Series \textbf{26} (2019), 1--10.

  \bibitem{YamagamiMatsuiLoops}
  A. Yamagami and Y. Matsui,
  \emph{On 3-adic Kaprekar loops},
  JP Journal of Algebra, Number Theory and Applications \textbf{40} (2018), 957--1028.

  \bibitem{YamagamiMatsuiConstants}
  A. Yamagami and Y. Matsui,
  \emph{On some formulas for Kaprekar constants},
  Symmetry \textbf{11} (2019), Article 885.

\end{thebibliography}
\end{document}